\def\NZQ{\mathbb}               
\def\ZZ{{\NZQ Z}}
\def\RR{{\NZQ R}}
\def\frk{\mathfrak}               
\def\Phi{{\frk N}}
\def\ab{{\bold a}}
\def\bb{{\bold b}}
\def\eb{{\bold e}}
\def\xb{{\bold x}}
\def\yb{{\bold y}}
\def\opn#1#2{\def#1{\operatorname{#2}}} 
\opn\chara{char} 
\opn\length{\ell} 
\opn\pd{pd} 
\opn\rk{rk}
\opn\projdim{proj\,dim} 
\opn\injdim{inj\,dim} 
\opn\rank{rank}
\opn\depth{depth} 
\opn\grade{grade} 
\opn\height{height}
\opn\embdim{emb\,dim} 
\opn\codim{codim}
\opn\Tr{Tr} 
\opn\bigrank{big\,rank}
\opn\superheight{superheight}
\opn\lcm{lcm}
\opn\trdeg{tr\,deg}
\opn\reg{reg} 
\opn\lreg{lreg} 
\opn\ini{in} 
\opn\lpd{lpd}
\opn\size{size}
\opn\mult{mult}
\opn\dist{dist}
\opn\cone{cone}
\opn\lex{lex}
\opn\rev{rev}
\opn\div{div} \opn\Div{Div} \opn\cl{cl} \opn\Cl{Cl}
\opn\Spec{Spec} \opn\Supp{Supp} \opn\supp{supp} \opn\Sing{Sing}
\opn\Ass{Ass} \opn\Min{Min}
\opn\Ann{Ann} \opn\Rad{Rad} \opn\Soc{Soc}
\opn\Syz{Syz} \opn\Im{Im} \opn\Ker{Ker} \opn\Coker{Coker}
\opn\Am{Am} \opn\Hom{Hom} \opn\Tor{Tor} \opn\Ext{Ext}
\opn\End{End} \opn\Aut{Aut} \opn\id{id} \opn\ini{in}
\opn\nat{nat}
\opn\pff{pf}
\opn\Pf{Pf} \opn\GL{GL} \opn\SL{SL} \opn\mod{mod} \opn\ord{ord}
\opn\Gin{Gin}
\opn\Hilb{Hilb}\opn\adeg{adeg}\opn\std{std}\opn\ip{infpt}
\opn\Pol{Pol}
\opn\sat{sat}
\opn\Var{Var}
\opn\Gen{Gen}
\opn\aff{aff} \opn\con{conv} \opn\relint{relint} \opn\st{st}
\opn\lk{lk} \opn\cn{cn} \opn\core{core} \opn\vol{vol}
\opn\link{link} \opn\star{star}
\opn\gr{gr}
\def\Hc{{\mathcal H}}
\def\Fc{{\mathcal F}}
\def\Oc{{\mathcal O}}
\def\Pc{{\mathcal P}}
\def\Qc{{\mathcal Q}}
\def\pot#1#2{#1[\kern-0.28ex[#2]\kern-0.28ex]}
\opn\dirlim{\underrightarrow{\lim}}
\opn\inivlim{\underleftarrow{\lim}}
\let\to=\rightarrow
\def\Implies{\ifmmode\Longrightarrow \else
        \unskip${}\Longrightarrow{}$\ignorespaces\fi}
\def\implies{\ifmmode\Rightarrow \else
        \unskip${}\Rightarrow{}$\ignorespaces\fi}
\def\iff{\ifmmode\Longleftrightarrow \else
        \unskip${}\Longleftrightarrow{}$\ignorespaces\fi}
\newtheorem{Theorem}{Theorem}[section]
\newtheorem{Lemma}[Theorem]{Lemma}
\newtheorem{Corollary}[Theorem]{Corollary}
\newtheorem{Example}[Theorem]{Example}
\let\epsilon\varepsilon
\let\phi=\varphi
\let\kappa=\varkappa
\def\qed{\hfill{
\ifhmode\textqed\fi
      \ifmmode\ifinner\quad\qedsymbol\else\dispqed\fi\fi}
}
\def\textqed{\unskip\nobreak\penalty50
       \hskip2em\hbox{}\nobreak\hfil\qedsymbol
       \parfillskip=0pt \finalhyphendemerits=0}
\def\dispqed{\rlap{\qquad\qedsymbol}}
\opn\dis{dis}
\opn\height{height}
\opn\dist{dist}
\def\pnt{{\raise0.5mm\hbox{\large\bf.}}}
\opn\Lex{Lex}
\begin{document}
\title{
Reverse lexicographic squarefree initial ideals and Gorenstein Fano polytopes
}
\author{Hidefumi Ohsugi and Takayuki Hibi
}

\subjclass[2010]{Primary 13P10; Secondary 52B20.}
\keywords{Gr\"obner basis, Gorenstein Fano polytope, unimodular triangulation.}

\address{Hidefumi Ohsugi,
Department of Mathematical Sciences,
School of Science and Technology,
Kwansei Gakuin University,
Sanda, Hyogo, 669-1337, Japan} 
\email{ohsugi@kwansei.ac.jp}

\address{Takayuki Hibi,
Department of Pure and Applied Mathematics,
Graduate School of Information Science and Technology,
Osaka University,
Toyonaka, Osaka 560-0043, Japan}
\email{hibi@math.sci.osaka-u.ac.jp}

\maketitle

\begin{abstract}
Via the theory of reverse lexicographic squarefree initial ideals of 
toric ideals, 
we give a new class of Gorenstein Fano polytopes
(reflexive polytopes) arising from a pair of
stable set polytopes of perfect graphs. 
\end{abstract}

\section*{Introduction}
Recall that an {\em integral} convex polytope is a convex polytope 
all of whose vertices have integer coordinates.  
An integral convex polytope $\Pc \subset \RR^{d}$ of dimension $d$
is called a {\em Fano polytope} if the origin of $\RR^{d}$ 
is a unique integer point belonging to the interior of $\Pc$. 
We say that a Fano polytope $\Pc \subset \RR^{d}$ is {\em Gorenstein}
if the dual polytope $\Pc^{\vee}$ of $\Pc$ is again integral.
(A Gorenstein Fano polytope is often called a {\em reflexive} polytope
in the literature.)
Gorenstein Fano polytopes are related with mirror symmetry and 
studied in a lot of areas of mathematics.
See, e.g., \cite[\S 8.3]{CLS} and \cite{survey}.
It is known that there are only finitely many Gorenstein Fano polytopes up to
unimodular equivalence if the dimension is fixed.
Classification results are known for low dimensional cases \cite{three, four}.
On the other hand,
one of the most important problem is
to construct new classes of Gorenstein Fano polytopes.
In the case of Gorenstein Fano {\em simplices}, 
there are nice results on classifications and constructions.
See, e.g., \cite{r1, r2, r3} and their references.
In order to find classes of Gorenstein Fano polytopes of high dimension
which are not necessarily simplices, integral convex
polytopes arising from some combinatorial objects are studied in several papers.
For example, the following classes are known:
\begin{itemize}
\item
Gorenstein Fano polytopes arising from the order polytopes of graded posets
(Hibi \cite{Hibiorder}, revisited by Heged\"us--Kasprzyk \cite[Lemma 5.10]{HK});
\item
Gorenstein Fano polytopes arising from the Birkhoff polytopes
(appearing in many papers.
See, e.g., Stanley's book \cite[I.13]{Sbook} and  Athanasiadis \cite{ata});
\item
Gorenstein Fano polytopes arising from directed graphs
satisfying some conditions
(Higashitani \cite{Higashi});
\item
Centrally symmetric configurations
(Ohsugi--Hibi \cite{CS});
\item
The centrally symmetric
polytope $\Oc(P)^{\pm}$ of the order polytope $\Oc(P)$
of a finite poset $P$ 
(Hibi--Matsuda--Ohsugi--Shibata \cite{HMOS}).
\end{itemize}
In the present paper, via the theory of Gr\"obner bases,
we give a new class of 
Gorenstein Fano polytopes which is not necessarily a simplex.
For any pair of perfect graphs $G_1$ and $G_2$ (here, $G_1 = G_2$ is possible)
on $d$ vertices, 
we show that the convex hull of ${\mathcal Q}_{G_1} \cup - {\mathcal Q}_{G_2}$,
where ${\mathcal Q}_{G_i}$ is the stable set polytope of $G_i$,
is a Gorenstein Fano polytope
of dimension $d$.
Note that there are a lot of pairs of perfect simple graphs on $d$ vertices\footnote{
See A052431 in ``The On-Line Encyclopedia of Integer Sequences,'' at 
{\tt http://oeis.org}
} (Figure 1).
\begin{figure}[h]
\begin{tabular}{|c|c|c|c|c|c|c|c|}
\hline
number of vertices& 2 & 3 & 4 & 5 & 6 & 7&8\\
\hline
perfect graphs & 2 & 4 & 11 & 33 & 148 & 906 & 8887\\
\hline
pairs of perfect graphs & 3 & 10 & 66 & 561 & 11,026 & 410,871 & 39,493,828\\
\hline
\end{tabular}
\caption{
Number of perfect graphs / pairs of perfect graphs
} 
\end{figure}
Any Gorenstein Fano polytope ${\mathcal P}$
 in our class is {\em terminal}, i.e., 
each integer point belonging to the boundary of ${\mathcal P}$ is a vertex of ${\mathcal P}$.
In particular, if both of two graphs are the complete 
(resp.~empty) graphs on $d$ vertices, then the Gorenstein Fano polytope
has $2d$ (resp.~$2^{d+1}-2$) vertices.
Thus, our class has enough size and variety
comparing with the existing classes above.

Let $\ZZ_{\geq 0}$ denote the set of nonnegative integers.
Let     
$A = [{\bf a}_{1}, \ldots, {\bf a}_{n}] \in \ZZ_{\geq 0}^{d \times n}$ 
and 
$B = [{\bf b}_{1}, \ldots, {\bf b}_{m}] \in \ZZ_{\geq 0}^{d \times m}$,
where each ${\bf a}_{i}$ and each ${\bf b}_{j}$ is 
a nonzero column vector belonging to $\ZZ_{\geq 0}^{d}$.
In Section $1$, after reviewing basic materials and notation on toric ideals,
we introduce the concept that $A$ and $B$ are {\em of harmony}.
Roughly speaking, Theorem \ref{squarefree} says that 
if $A$ and $B$ are of harmony and 
if the toric ideal of each of $A$ and $B$ possesses
a reverse lexicographic squarefree initial ideal which enjoys certain 
properties, then the toric ideal of 
$[{\bf 0}, -B, A] \in \ZZ^{d \times (n + m+1)}$ 
possesses a squarefree initial ideal 
with respect to a reverse lexicographic order
whose smallest variable corresponds to the column
${\bf 0} \in \ZZ^{d}$. 
Working with the same situation as in Theorem \ref{squarefree},
Corollary \ref{Fano} guarantees that
if the integral convex polytope $\Pc \subset \RR^{d}$ which is 
the convex hull of 
$\{ - {\bf b}_{1}, \ldots, - {\bf b}_{m}, 
{\bf a}_{1}, \ldots, {\bf a}_{n} \}$
is a Fano polytope with 
$\Pc \cap \ZZ^{d} = \{ {\bf 0}, 
- {\bf b}_{1}, \ldots, - {\bf b}_{m}, 
{\bf a}_{1}, \ldots, {\bf a}_{n} \}$ 
and if 
there is a $d \times d$ minor $A'$ of $[-B, A]$ with
$\det(A') = \pm1$, 
then $\Pc$ is Gorenstein. 

The topic of Section $2$ is the incidence matrix $A_{\Delta}$ of a simplicial complex
$\Delta$ on $[d] = \{ 1, \ldots, d \}$.
It follows that if $\Delta$ and $\Delta'$ are simplicial complexes on $[d]$,
then $A_{\Delta}$ and $A_{\Delta'}$ are of harmony.  
Following Theorem \ref{squarefree} it is reasonable
to study the problem when the toric ideal of $A_{\Delta}$ satisfies 
the required condition on initial ideals of Theorem \ref{squarefree}.  
Somewhat surprisingly,
Theorem \ref{compressed} says that  
$A_{\Delta}$ satisfies 
the required condition on initial ideals of Theorem \ref{squarefree}
if and only if $\Delta$ coincides with 
the set $S(G)$ of stable sets of a perfect graph $G$ on $[d]$.
A related topic on Gorenstein Fano polytopes arising from simplicial
complexes will be studied (Theorem \ref{GORFANO}).

\section{Reverse lexicographic squarefree initial ideals}
Let $K$ be a field and $K[{\bf t}, {\bf t}^{-1}, s] = 
K[t_{1}, t_{1}^{-1}, \ldots, t_{d}, t_{d}^{-1}, s]$
the Laurent polynomial ring in $d + 1$ variables over $K$.
Given an integer $d \times n$ matrix 
$A = [{\bf a}_{1}, \ldots, {\bf a}_{n}]$, where
${\bf a}_{j} = [a_{1j}, \ldots, a_{dj}]^{\top}$, 
the transpose of $[a_{1j}, \ldots, a_{dj}]$, is
the $j$th column of $A$, 
the {\em toric ring} of $A$ is the subalgebra $K[A]$ of
$K[{\bf t}, {\bf t}^{-1}, s]$ which is generated by the Laurent
polynomials ${\bf t}^{{\bf a}_{1}}s = 
t_{1}^{a_{11}} \cdots t_{d}^{a_{d1}}s, \ldots,
{\bf t}^{{\bf a}_{n}}s = 
t_{1}^{a_{1n}} \cdots t_{d}^{a_{dn}}s$.
Let $K[{\bf x}] = K[x_{1}, \ldots, x_{n}]$ denote the polynomial ring
in $n$ variables over $K$ and define the surjective ring homomorphism
$\pi : K[{\bf x}] \to K[A]$ by setting $\pi(x_{j}) = {\bf t}^{{\bf a}_{j}}s$
for $j = 1, \ldots, n$.  The {\em toric ideal} of $A$ is the kernel $I_{A}$
of $\pi$.  Every toric ideal is generated by binomials.  
(Recall that a polynomial $f \in K[{\bf x}]$ is a binomial if 
$f = u - v$,  
where $u = \prod_{i=1}^{n} x_{i}^{a_{i}}$ and 
$v = \prod_{i=1}^{n} x_{i}^{b_{i}}$
are monomials with $\sum_{i=1}^{n} a_{i} = \sum_{i=1}^{n} b_{i}$.)
Let $<$ be a monomial order on $K[{\bf x}]$ and 
${\rm in}_{<}(I_{A})$ the initial ideal of $I_{A}$ with respect to $<$.
We say that ${\rm in}_{<}(I_{A})$ is {\em squarefree} if 
${\rm in}_{<}(I_{A})$ is generated by squarefree monomials.
We refer the reader to \cite[Chapters 1 and 5]{dojoEN} 
for the information about Gr\"obner bases and toric ideals. 

Let $\ZZ_{\geq 0}^{d}$ denote the set of integer column vectors
$[a_{1}, \ldots, a_{d}]^{\top}$ with each $a_{i} \geq 0$.
Given an integer vector ${\bf a} = [a_{1}, \ldots, a_{d}]^{\top} \in \ZZ^{d}$, 
let ${\bf a}^{(+)} = [a_{1}^{(+)}, \ldots, a_{d}^{(+)}]^{\top}, 
{\bf a}^{(-)} = [a_{1}^{(-)}, \ldots, a_{d}^{(-)}]^{\top} \in \ZZ_{\geq 0}^{d}$ 
where $a_{i}^{(+)} = \max \{0, a_{i} \}$ and  $a_{i}^{(-)} = \max\{0, -a_{i}\}$.
Note that ${\bf a} = {\bf a}^{(+)} - {\bf a}^{(-)}$ holds in general. 
Let $\ZZ_{\geq 0}^{d \times n}$ denote the set of $d \times n$ 
integer matrices $(a_{ij})_{1 \leq i \leq d \atop 1 \leq j \leq n}$
with each $a_{ij} \geq 0$.
Furthermore if no columns of $A \in \ZZ_{\geq 0}^{d \times n}$
is the zero vector
${\bf 0} = [0, \ldots, 0]^{\top} \in \ZZ^{d}$, then 
we introduce the $d \times (n + 1)$ integer matrix $A^{\sharp}$
which is obtained by adding the column  
${\bf 0} \in \ZZ^{d}$ to $A$.

Now, given $A \in \ZZ_{\geq 0}^{d \times n}$
and $B \in \ZZ_{\geq 0}^{d \times m}$, we say that $A$ and $B$ 
are {\em of harmony} if the following condition is satisfied:
Let ${\bf a}$ be a column of $A^{\sharp}$ 
and ${\bf b}$ that of $B^{\sharp}$.  
Let ${\bf c} = {\bf a} - {\bf b} \in \ZZ^{d}$. 
If ${\bf c} = {\bf c}^{(+)} - {\bf c}^{(-)}$,
then ${\bf c}^{(+)}$ is a column vector of $A^{\sharp}$
and ${\bf c}^{(-)}$ is a column vector of $B^{\sharp}$.

\begin{Theorem}
\label{squarefree}
Let $A = [{\bf a}_{1}, \ldots, {\bf a}_{n}] \in \ZZ_{\geq 0}^{d \times n}$ 
and 
$B = [{\bf b}_{1}, \ldots, {\bf b}_{m}]\in \ZZ_{\geq 0}^{d \times m}$,
where none of ${\bf a}_{i}$'s and ${\bf b}_{j}$'s is
${\bf 0} \in \ZZ^{d}$,  
be of harmony.
Let $K[z, {\bf x}] = K[z, x_{1}, \ldots, x_{n}]$
and $K[z, {\bf y}] = K[z, y_{1}, \ldots, y_{m}]$
be the polynomial rings over a field $K$. 
Suppose that 
${\rm in}_{<_A}(I_{A^{\sharp}})
\subset K[z, {\bf x}]$ and 
${\rm in}_{<_B}(I_{B^{\sharp}}) \subset K[z, {\bf y}]$ are
squarefree with respect to reverse lexicographic orders 
$<_A$ on $K[z, {\bf x}]$
and $<_B$ on $K[z, {\bf y}]$ respectively
satisfying the conditions that
\begin{itemize}
\item
$x_i  <_A  x_j$ \,if\, $\pi(x_i)$ \,divides\, $\pi(x_j)$;
\item
$z  <_A  x_k$ for $1 \leq k \leq n$, where 
$z$ corresponds to
the column ${\bf 0} \in \ZZ^{d}$ of $A^{\sharp}$;
\item
$z  <_B  y_k$ for $1 \leq k \leq m$, where 
$z$ corresponds to
the column ${\bf 0} \in \ZZ^{d}$ of $B^{\sharp}$. 
\end{itemize}
Let $[-B, A]$ denote the $d \times (n + m)$
integer matrix 
\[
[- {\bf b}_{1}, \ldots, - {\bf b}_{m}, 
{\bf a}_{1}, \ldots, {\bf a}_{n}].
\]
Then the toric ideal $I_{[-B, A]^{\sharp}}$ of
$[-B, A]^{\sharp}$ possesses a squarefree initial ideal 
with respect to a reverse lexicographic order
whose smallest variable corresponds to the column
${\bf 0} \in \ZZ^{d}$ of $[-B, A]^{\sharp}$. 
\end{Theorem}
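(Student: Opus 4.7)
My plan is to produce an explicit Gr\"obner basis of $I_{[-B,A]^\sharp}$ whose leading terms are all squarefree, with respect to a reverse lexicographic order that extends both $<_A$ and $<_B$. Let $<$ be the reverse lex order on $K[z,\mathbf{x},\mathbf{y}]$ whose variable sequence, from largest to smallest, lists the $x_k$'s in the order induced by $<_A$, then the $y_k$'s in the order induced by $<_B$, and finally $z$. Because reverse lex restricts to reverse lex on subsets of variables, the restriction of $<$ to $K[z,\mathbf{x}]$ coincides with $<_A$ and the restriction to $K[z,\mathbf{y}]$ with $<_B$; in particular $z$ is the smallest variable of $<$, which is precisely what the conclusion demands.

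Take as candidate Gr\"obner basis
\[
\Gc\ :=\ \Gc_A\ \cup\ \Gc_B\ \cup\ \Gc_{AB},
\]
where $\Gc_A,\Gc_B$ are the reduced Gr\"obner bases of $I_{A^\sharp}, I_{B^\sharp}$ with respect to $<_A,<_B$, and $\Gc_{AB}$ consists of one binomial per pair $(i,j)\in\{1,\dots,n\}\times\{1,\dots,m\}$: writing $\mathbf{a}_i-\mathbf{b}_j=\mathbf{c}^{(+)}-\mathbf{c}^{(-)}$, the harmony hypothesis supplies indices $i',j'$ with $\mathbf{a}_{i'}=\mathbf{c}^{(+)}$ and $\mathbf{b}_{j'}=\mathbf{c}^{(-)}$ (with the convention that $i'=0$ or $j'=0$ stands for the zero column, i.e.\ the variable $z$), and we put $y_jx_i-y_{j'}x_{i'}\in\Gc_{AB}$. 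Squarefreeness of initial terms from $\Gc_A,\Gc_B$ is given; moreover, the condition $x_i<_Ax_j$ whenever $\pi(x_i)\mid\pi(x_j)$ (together with its analogue for $<_B$, which is equally required) forces every leading term of $\Gc_A$ or $\Gc_B$ to lie in $K[\mathbf{x}]$ or $K[\mathbf{y}]$ respectively: a binomial with $z$ on its leading side would factor as $z$ times a shorter relation, contradicting reducedness. For the harmony binomials, the decomposition gives $\mathbf{a}_{i'}\le\mathbf{a}_i$ and $\mathbf{b}_{j'}\le\mathbf{b}_j$ coordinatewise, hence $x_{i'}\le x_i$ and $y_{j'}\le y_j$ in $<$; a direct case analysis (including degenerate cases where $x_{i'}$ or $y_{j'}$ is $z$) shows that the smallest variable distinguishing $y_jx_i$ from $y_{j'}x_{i'}$ always appears with strictly smaller exponent on the $y_jx_i$ side, so the leading term is the squarefree quadratic $y_jx_i$. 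This leading term is nontrivial exactly when $\mathrm{supp}(\mathbf{a}_i)\cap\mathrm{supp}(\mathbf{b}_j)\ne\emptyset$.

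The last and hardest step is to verify that $\Gc$ is a Gr\"obner basis, which I would do by proving uniqueness of standard monomials in each $\pi$-fiber. Given any monomial $u=z^a\,U\,V$ with $U\in K[\mathbf{y}]$, $V\in K[\mathbf{x}]$, iteratively apply harmony binomials to any divisor $y_jx_i$ of $u$ with $\mathrm{supp}(\mathbf{a}_i)\cap\mathrm{supp}(\mathbf{b}_j)\ne\emptyset$; by the leading-term analysis above each such step strictly decreases the monomial under $<$, so the well-ordering of $<$ forces termination at some $z^{a'}\,U'\,V'$ in which all $y_j\mid U'$, $x_i\mid V'$ have disjoint $(\mathbf{a}_i,\mathbf{b}_j)$-supports. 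This in turn makes $\mathrm{supp}(\mathbf{a}_{V'})$ and $\mathrm{supp}(\mathbf{b}_{U'})$ disjoint, pinning them down as the positive and negative parts $\mathbf{v}^{(+)},\mathbf{v}^{(-)}$ of the fiber's image vector. Reducing $V'$ within $K[\mathbf{x}]$ (modulo $\Gc_A$) and $U'$ within $K[\mathbf{y}]$ (modulo $\Gc_B$) produces a canonical normal form $z^{a''}\,V_0(\mathbf{v}^{(+)})\,U_0(\mathbf{v}^{(-)})$, where $V_0(\mathbf{v}^{(+)})$ is the unique $\Gc_A$-standard monomial in $K[\mathbf{x}]$ with $\pi_A$-vector $\mathbf{v}^{(+)}$ (uniqueness follows because two such monomials of different $x$-degrees would be linked by a binomial whose $z$-free side is the leading term, so only one is standard), and analogously for $U_0(\mathbf{v}^{(-)})$. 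Since this normal form depends only on the fiber invariants $(\mathbf{v},D)$, each fiber has a unique standard monomial, so $\Gc$ is a Gr\"obner basis and
\[
\ini_<(I_{[-B,A]^\sharp})\ =\ \ini_{<_A}(I_{A^\sharp})\ +\ \ini_{<_B}(I_{B^\sharp})\ +\ \langle y_jx_i : \mathrm{supp}(\mathbf{a}_i)\cap\mathrm{supp}(\mathbf{b}_j)\ne\emptyset\rangle
\]
is squarefree. The principal obstacle is this termination-and-uniqueness argument: the harmony hypothesis is invoked precisely to guarantee that the harmony binomials are rich enough to drive every overlapping-support pair $y_jx_i$ out of the monomial, while the monotonicity conditions on $<_A$ (and its counterpart for $<_B$) are needed to keep the leading terms oriented correctly at every reduction step.
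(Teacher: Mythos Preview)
Your overall strategy---propose $\Gc_A\cup\Gc_B$ together with the ``harmony'' quadrics $y_jx_i-y_{j'}x_{i'}$ as a Gr\"obner basis and then verify uniqueness of standard monomials in each $\pi$-fiber---is exactly the paper's approach, and your normal-form argument is equivalent to the paper's proof by contradiction. There is, however, one genuine gap.

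You place the $x$-variables \emph{above} the $y$-variables in your reverse lexicographic order, and you then invoke ``the analogue for $<_B$, which is equally required.'' But the theorem does not assume any divisibility monotonicity on $<_B$; the only such hypothesis is $x_i<_Ax_j$ when $\pi(x_i)\mid\pi(x_j)$. With your ordering the smallest non-$z$ variables are the $y$'s, so reverse lex compares $y_jx_i$ against $y_{j'}x_{i'}$ by looking at the $y$'s first: you need $y_{j'}<y_j$ to force $\ini_<(y_jx_i-y_{j'}x_{i'})=y_jx_i$, and without monotonicity on $<_B$ this can fail. Concretely, if $\mathbf{b}_{j'}\le\mathbf{b}_j$ coordinatewise but $y_{j'}>y_j$ in $<_B$, the initial term of your harmony binomial is $y_{j'}x_{i'}$, and your reduction step no longer decreases the monomial. (A minor side remark: the fact that leading terms of $\Gc_A,\Gc_B$ avoid $z$ follows just from $z$ being smallest in reverse lex and primeness of the toric ideal, not from the monotonicity hypothesis.)

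The fix is simply to swap the two blocks: put the $y$-variables above the $x$-variables, so that from largest to smallest the order reads $y$'s, then $x$'s, then $z$. Reverse lex then compares the $x$-variables first; since $\mathbf{a}_{i'}\le\mathbf{a}_i$ coordinatewise gives $x_{i'}<_Ax_i$ by the stated hypothesis on $<_A$, the leading term is always $y_jx_i$ whenever $\mathrm{supp}(\mathbf{a}_i)\cap\mathrm{supp}(\mathbf{b}_j)\ne\emptyset$, and no condition on $<_B$ beyond $z$ being smallest is ever used. This is precisely the ordering the paper chooses. With that single correction your argument goes through and matches the paper's proof.
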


\begin{proof}
Let $K[[-B, A]^{\sharp}] \subset K[{\bf t}, {\bf t}^{-1}, s]
= K[t_{1}, t_{1}^{-1}, \ldots, t_{d}, t_{d}^{-1}, s]$
be the toric ring of $[-B, A]^{\sharp}$
and $I_{[-B, A]^{\sharp}} \subset K[\xb, \yb, z]
= K[x_1, \ldots, x_n, y_1, \ldots, y_m, z]$ 
the toric ideal of $[-B, A]^{\sharp}$.
Recall that $I_{[-B, A]^{\sharp}}$ is the kernel of 
$\pi : K[\xb, \yb, z] \to K[[-B, A]^{\sharp}]$ 
with
$\pi(z) = s$,
$\pi(x_i) = {\bf t}^{{\bf a}_{i}}s$
for $i = 1, \ldots, n$ and
$\pi(y_j) = {\bf t}^{ - {\bf b}_{j}}s$
for $j = 1, \ldots, m$.

Suppose that the reverse lexicographic orders
$<_A$ and $<_B$ are induced by the orderings
$z <_A x_n <_A \cdots <_A x_1$
and
$z <_B y_m <_B \cdots <_B y_1$.
Let $<_{\rm rev}$ be the reverse lexicographic order on 
$K[\xb, \yb, z]$ induced by the ordering
\[
z < x_n < \cdots < x_1 < y_m < \cdots < y_1.
\]

In general, if $\ab = [a_{1}, \ldots, a_{d}]^{\top} \in \ZZ_{\geq 0}^{d}$, 
then ${\rm supp}(\ab)$ is the set of those $1 \leq i \leq d$ 
with $a_{i} \neq 0$.  Now, we introduce the following   
\[
{\mathcal E}
=
\{ \, (i,j) \, : \, 1 \leq i \leq n, \, 1 \leq j \leq m, \, 
{\rm supp} (\ab_i) \cap {\rm supp} (\bb_j) \neq 
\emptyset \, \}.
\] 
Let ${\bf c} = \ab_i - \bb_j$
with $(i,j) \in {\mathcal E}$.
Then ${\bf c}^{(+)} \neq \ab_i$
and
${\bf c}^{(-)} \neq \bb_j$.
The hypothesis that $A$ and $B$ are of harmony guarantees that
${\bf c}^{(+)}$ is a column of $A^{\sharp}$
and ${\bf c}^{(-)}$ is a column of $B^{\sharp}$.
It follows that $f = x_i y_j -u$ ($\neq 0$)
belongs to $I_{[-B, A]^{\sharp}}$, where
\[
u=\left\{
\begin{array}{cl}
x_k y_{\ell} & \mbox{if } {\bf c}^{(+)} = \ab_k \mbox{ and }
{\bf c}^{(-)} = \bb_\ell, \\
z y_\ell &  \mbox{if } {\bf c}^{(+)} = {\bf 0} \mbox{ and }
{\bf c}^{(-)} = \bb_\ell, \\
x_k z&  \mbox{if } {\bf c}^{(+)} = \ab_k \mbox{ and }
{\bf c}^{(-)} = {\bf 0}, \\
z^2 &  \mbox{if } {\bf c}^{(+)} = {\bf c}^{(-)} = {\bf 0}.
\end{array}
\right.
\]
If $z$ divides $u$, then ${\rm in}_{<_{\rm rev}}(f) = x_iy_j$,
where ${\rm in}_{<_{\rm rev}}(f)$ is the initial monomial of 
$f \in K[{\bf x}, {\bf y}, z]$.
If $z$ cannot divide $u$, then, since $\pi(x_k)$ divides $\pi(x_i)$,
one has $x_k <_A x_i$ and ${\rm in}_{<_{\rm rev}}(f) = x_iy_j$. 
Hence
\[
\{ \, x_i y_j \, : \, (i,j) \in {\mathcal E} \, \}
\subset
{\rm in}_{<_{\rm rev}}(I_{[-B, A]^{\sharp}}).
\]

Now, let 
${\mathcal M}_{A}$ (resp. ${\mathcal M}_B$) be
the minimal set of squarefree monomial generators of 
${\rm in}_{<_A} (I_{A^\sharp})$
(resp. ${\rm in}_{<_B} (I_{B^\sharp})$).
Suppose that 
${\rm in}_{<_{\rm rev}}
( I_{[-B, A]^{\sharp}})$ cannot be generated by the set
of squarefree monomials
\[
{\mathcal M}=
\{ \, x_i y_j \, : \, (i,j) \in {\mathcal E} \, \}
\cup 
{\mathcal M}_A
\cup 
{\mathcal M}_B \, \, \,  
( \, \subset 
{\rm in}_{<_{\rm rev}}( I_{[-B, A]^{\sharp}}) \, ).
\]
The following fact (\cite[(0.1), p.~1914]{rootsystem}) on Gr\"obner bases 
is known:

\bigskip

A finite set ${\mathcal G}$ of $I_A$ is a Gr\"obner basis with respect to $<$
if and only if 
$\pi(u) \neq \pi(v)$ for any monomials
 $u \notin ( {\rm in}_<(g) : g \in {\mathcal G})$ and $v\notin( {\rm in}_<(g) : g \in {\mathcal G})$ with $u \neq v$.

\bigskip

\noindent
Since ${\mathcal G}$ with ${\mathcal M} = \{ {\rm in}_<(f) : f \in {\mathcal G}\}$
is not a Gr\"obner basis, it follows that
there exists a nonzero irreducible binomial 
$g = u - v$ belonging to $I_{[-B, A]^{\sharp}}$
such that each of $u$ and $v$
can be divided by none of the monomials belonging to 
${\mathcal M}$. 
Write
\[
u = \left(\,
\prod_{p \in P} x_{p}^{i_p} 
\right)
\left(\, 
\prod_{q \in Q} y_q^{j_q} 
\right),
\, \, \, \, \, \, 
v =
z^{\alpha} 
\left(\,
\prod_{r \in R} x_{r}^{k_r}
\right) 
\left(\, 
\prod_{s \in S} y_s^{\ell_s} 
\right),
\]
where $P$ and $R$ are subsets of $\{ 1, \ldots, n \}$, 
where $Q$ and $S$ are subsets of $\{ 1, \ldots, m \}$, 
where $\alpha$ is a nonnegative integer, 
and where each of $i_p, j_q, k_r, \ell_s$ is a positive integer.
Since $g = u -v$ is irreducible, one has
$P\cap R  = Q \cap S = \emptyset$.
Furthermore, the fact that each of $x_i y_j$ with $(i,j) \in {\mathcal E}$ 
can divide neither $u$ nor $v$ guarantees that
\[
\left(\,
\bigcup_{p \in P} {\rm supp} (\ab_p) 
\right)
\cap 
\left(\,
\bigcup_{q \in Q} {\rm supp} (\bb_q) 
\right)
=
\left(\,
\bigcup_{r \in R} {\rm supp} (\ab_r) 
\right)
\cap 
\left(\,
\bigcup_{s \in S} {\rm supp} (\bb_s) 
\right)
=
\emptyset.
\]
Since $\pi (u) = \pi (v)$, it follows that
\[
\sum_{p \in P} i_p \ab_p = \sum_{r \in R} k_r \ab_r,
\, \, \, \, \,  
\sum_{q \in Q} j_q \bb_q = \sum_{s \in S} \ell_s \bb_s.
\]
Let $\gamma_P = \sum_{p \in P} i_p$,
$\gamma_Q = \sum_{q \in Q} j_q$,
$\gamma_R = \sum_{r \in R} k_r$,
and $\gamma_S = \sum_{s \in S} \ell_s$.  Then
\[
\gamma_P + \gamma_Q = \alpha + \gamma_R + \gamma_S.
\]
Since $\alpha \geq 0$, it follows that
either $\gamma_P \geq \gamma_R$ or $\gamma_Q \geq \gamma_S$.
Let, say, $\gamma_P > \gamma_R$, then
\[
h = \prod_{p \in P} x_{p}^{i_p}
-
z^{\gamma_P - \gamma_R} \left(\, \prod_{r \in R} x_{r}^{k_r} \right)
\neq 0
\]
belongs to $I_{[-B, A]^{\sharp}}$
and ${\rm in}_{<_{\rm rev}}(h) = \prod_{p \in P} x_{p}^{i_p}$
divides $u$, a contradiction.  Hence $\gamma_P = \gamma_R$.
Then the binomial
\begin{eqnarray*}
\label{binomial}
h_{0} = \prod_{p \in P} x_{p}^{i_p} 
-
\prod_{r \in R} x_{r}^{k_r}
\end{eqnarray*}
belongs to $I_{[-B, A]^{\sharp}}$.  
If $h_{0} \neq 0$, then 
either
$\prod_{p \in P} x_{p}^{i_p}$ 
or
$\prod_{r \in R} x_{r}^{k_r}$
must belong to ${\rm in}_{<_{\rm rev}}(I_{[-B, A]^{\sharp}})$.
This contradicts the fact that
each of $u$ and $v$
can be divided by none of the monomials belonging to 
${\mathcal M}$.  Hence $h_{0} = 0$
and $P = R = \emptyset$.
Similarly, $Q = S = \emptyset$.  Hence $\alpha = 0$ and $g = 0$.
This contradiction guarantees that
${\mathcal M}$ is the minimal set of squarefree monomial generators
of ${\rm in}_{<_{\rm rev}} (I_{[-B, A]^{\sharp}})$, as desired.
\end{proof}

Given an integral convex polytope $\Pc \subset \RR^{d}$, we write
$A_{\Pc}$ for the integer matrices whose column vectors are those
${\bf a} \in \ZZ^{d}$ belonging to $\Pc$.
The toric ring $K[A_{\Pc}]$ is often called the toric ring of $\Pc$.
A triangulation $\Delta$ of $\Pc$ with using the vertices belonging to
$\Pc \cap \ZZ^{d}$ is {\em unimodular} if the normalized volume
(\cite[p.~253]{dojoEN})
of each facet of $\Delta$ is equal to $1$ and is {\em flag} if
every minimal nonface of $\Delta$ is an edge.  
It follows from \cite[Chapter 8]{Sturmfels} that
if the toric ideal $I_{A_{\Pc}}$ of $A_{\Pc}$ possesses a squarefree initial 
ideal, then $\Pc$ possesses a unimodular triangulation.  Furthermore
if $I_{A_{\Pc}}$ possesses an initial ideal generated by quadratic
squarefree monomials, then $\Pc$ possesses a unimodular triangulation
which is flag.

An integral convex polytope $\Pc \subset \RR^{d}$ of dimension $d$
is called {\em Fano} if the origin of $\RR^{d}$ is a unique integer 
point belonging to the interior of $\Pc$.  We say that
a Fano polytope $\Pc$ is {\em Gorenstein} if the dual polytope $\Pc^{\vee}$ 
of $\Pc$ is again integral (\cite{Batyrev}, \cite{Hibidual}).
A {\em smooth} Fano polytope is a simplicial Fano polytope 
$\Pc \subset \RR^{d}$ for which the $d$ vertices of
each facet of $\Pc$ is a $\ZZ$-basis of $\ZZ^{d}$. 

\begin{Lemma}
\label{basiclemma}
Let $\Pc \subset \RR^{d}$ be an integral convex polytope
of dimension $d$ for which
${\bf 0} \in \ZZ^{d}$ belongs to $\Pc$.
Suppose that
there is a $d \times d$ minor $A'$ of $A_{\Pc}$ with
$\det(A') = \pm1$
and that
$I_{A_{\Pc}}$ possesses a squarefree initial ideal 
with respect to a reverse lexicographic order whose smallest variable 
corresponds to the column ${\bf 0} \in \ZZ^{d}$ of $A_{\Pc}$. 
Then, for each facet $\Fc$ of $\Pc$ with ${\bf 0} \not\in \Fc$,
one has $\ZZ \Fc =\ZZ^d$, where 
\[
\ZZ \Fc= \sum_{ \ab \, \in \, \Fc \, \cap \, \ZZ^{d} }\ZZ \ab, 
\] 
and the equation of the supporting hyperplane $\Hc \subset \RR^{d}$
with $\Fc \subset \Hc$ is of the form
\[
a_{1}z_{1} + \cdots + a_{d} z_{d} = 1
\]
with each $a_{j} \in \ZZ$.

In particular if $\Pc$ is a Fano polytope,
then $\Pc$ is Gorenstein.  
Furthermore if $\Pc$ is a simplicial
Fano polytope, then $\Pc$ is a smooth Fano polytope.
\end{Lemma}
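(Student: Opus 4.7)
The approach is to reinterpret the two hypotheses geometrically and then read off the facet information from a unimodular $d$-simplex sitting over each facet. The minor hypothesis $\det A' = \pm1$ forces $\ZZ A_{\Pc} = \ZZ^d$, so normalized volumes and unimodularity may be measured against $\ZZ^d$ itself. By \cite[Chapter 8]{Sturmfels} the squarefree initial ideal produces a unimodular triangulation $\Delta$ of $\Pc$ whose vertex set lies in $\Pc \cap \ZZ^d$; and because the order is reverse lexicographic with the variable $z$ (corresponding to $\mathbf{0}$) smallest, $\Delta$ is a pulling at $\mathbf{0}$, meaning that every maximal simplex of $\Delta$ has $\mathbf{0}$ as a vertex. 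The fact that a reverse lex order with a distinguished smallest variable produces a triangulation pulled at that vertex is a standard property of such orders.

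For the first conclusion, fix a facet $\Fc$ of $\Pc$ with $\mathbf{0}\notin\Fc$. The restriction $\Delta|_{\Fc}$ is a unimodular triangulation of $\Fc$, so we may pick a maximal $(d-1)$-simplex $\tau\in\Delta|_{\Fc}$ with lattice vertices $v_1,\dots,v_d\in\Fc\cap\ZZ^d$. By the pulling property the $d$-simplex $\sigma$ with vertices $\mathbf{0},v_1,\dots,v_d$ is a maximal simplex of $\Delta$, and unimodularity against $\ZZ^d$ translates to the statement that $\{v_1,\dots,v_d\}=\{v_1-\mathbf{0},\dots,v_d-\mathbf{0}\}$ is a $\ZZ$-basis of $\ZZ^d$. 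Hence $\ZZ\Fc \supseteq \sum_i \ZZ v_i = \ZZ^d$ and the desired equality follows.

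For the hyperplane equation, let $u\cdot z = c$ be the primitive integer equation of the supporting hyperplane $\Hc\supset \Fc$, normalized so that $\gcd(u_1,\dots,u_d)=1$. Evaluating at the $v_i$ yields $u\cdot v_i = c$, and viewing $u$ as a $\ZZ$-linear form shows $u(\ZZ^d) = u\bigl(\sum_i \ZZ v_i\bigr) = c\ZZ$, while primitivity of $u$ gives $u(\ZZ^d) = \ZZ$. Therefore $|c|=1$; since $\mathbf{0}\notin\Fc$ forces $c\neq 0$, after a sign change on $u$ we obtain $c=1$, which is the asserted form $a_1z_1+\cdots+a_dz_d=1$ with $a_j\in\ZZ$. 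The Gorenstein corollary is then immediate: if $\Pc$ is Fano, the origin is interior, no facet contains it, and the preceding arguments apply to every facet, giving exactly the condition that $\Pc^{\vee}$ is integral.

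For the smooth-Fano corollary under the simplicial hypothesis, each facet $\Fc$ is a $(d-1)$-simplex with $d$ vertices $w_1,\dots,w_d$, and we want $\{w_1,\dots,w_d\}$ itself to be a $\ZZ$-basis of $\ZZ^d$. The $v_i$ produced in the main argument are only vertices of $\tau\in\Delta|_{\Fc}$, and upgrading to $\{w_1,\dots,w_d\}$ requires $\tau=\Fc$, equivalently $\Delta|_{\Fc}=\{\Fc\}$, equivalently no lattice point of $\Fc$ beyond its vertices. I expect this last step --- showing that, under the simplicial hypothesis, the pulling triangulation cannot subdivide a facet of $\Pc$ by introducing an interior lattice point of that facet as a new vertex --- to be the main technical obstacle of the proof, and I would attempt it by combining the simplicial structure of $\Pc$ with the compatibility forced on $\Delta$ by the pulling at $\mathbf{0}$.
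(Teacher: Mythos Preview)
Your argument for the two main conclusions is correct and is essentially the paper's own proof: both take the pulling triangulation $\Delta$ attached to the given reverse lexicographic order, use that $\mathbf{0}$ lies in every maximal simplex, pick a maximal simplex $\{\mathbf{0},b_1,\dots,b_d\}$ sitting over the chosen facet $\Fc$, and read off from unimodularity (calibrated against $\ZZ^d$ via the minor hypothesis) that $\{b_1,\dots,b_d\}$ is a $\ZZ$-basis. Your derivation of the facet equation through the primitive normal is just a more explicit version of what the paper compresses into a single sentence, and the Gorenstein consequence is immediate in both treatments.

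Your caution about the smooth Fano clause is exactly right, and it is worth noting that the paper's proof does not address this clause separately either: it stops after exhibiting the basis $\{b_1,\dots,b_d\}\subset\Fc\cap\ZZ^d$, which need not be the vertex set of the simplex $\Fc$. The upgrade you propose would require each simplicial facet to contain no lattice points beyond its vertices, but the hypotheses do not force this. For instance, in dimension $2$ the triangle $\Pc=\conv\{(2,-1),(-1,2),(-1,-1)\}$ is a simplicial Fano polytope, the identity $2\times 2$ minor occurs in $A_{\Pc}$ (from $(1,0),(0,1)\in\Pc$), and every reverse lexicographic order with $z$ smallest gives a squarefree initial ideal because the pulling at $\mathbf{0}$ cones over the nine boundary lattice segments, each yielding a unimodular triangle; yet the edge from $(2,-1)$ to $(-1,2)$ has $\bigl|\det\begin{smallmatrix}2&-1\\-1&2\end{smallmatrix}\bigr|=3$, so $\Pc$ is not smooth. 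Thus the obstacle you flagged is not merely technical: the smooth Fano assertion, as stated, does not follow from the hypotheses, and no refinement of the pulling argument will close the gap.
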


\begin{proof}
Let $\Delta$ be the {\em pulling triangulation} (\cite[p.~268]{dojoEN}) 
coming from a squarefree initial ideal 
with respect to a reverse lexicographic order whose smallest variable 
corresponds to the column ${\bf 0} \in \ZZ^{d}$ of $A_{\Pc}$.
A crucial fact is that 
the origin of $\RR^{d}$ belongs to each facet of $\Delta$. 
Let $F$ be a facet of $\Delta$ with the vertices 
${\bf 0}, {\bf b}_{1}, \ldots, {\bf b}_{d}$
for which $\{ {\bf b}_{1}, \ldots, {\bf b}_{d} \} \subset \Fc$.
The existence of a $d \times d$ minor $A'$ of $A_{\Pc}$ with
$\det(A') = \pm1$ guarantees that the normalized volume of $F$ coincides
with $|\det(B)|$, where 
$B = [{\bf b}_{1}, \ldots, {\bf b}_{d}]$.
Since $\Delta$ is unimodular, one has $\det(B) = \pm1$.
Hence $\{ {\bf b}_{1}, \ldots, {\bf b}_{d} \}$ is a $\ZZ$-basis of $\ZZ^{d}$
and $\ZZ \Fc =\ZZ^d$ follows.  Moreover the hyperplane $\Hc \subset \RR^{d}$
with each ${\bf b}_{j} \in \Hc$ is of the form
$a_{1}z_{1} + \cdots + a_{d} z_{d} = 1$ with each $a_{j} \in \ZZ$, as desired.
\end{proof}

\begin{Corollary}
\label{Fano}
Work with the same situation as in Theorem \ref{squarefree}.
Let $\Pc \subset \RR^{d}$ be the integral convex polytope which is 
the convex hull of 
$\{ - {\bf b}_{1}, \ldots, - {\bf b}_{m}, 
{\bf a}_{1}, \ldots, {\bf a}_{n} \}$.
Suppose that ${\bf 0} \in \ZZ^{d}$ belongs to the interior of $\Pc$ 
and that  
there is a $d \times d$ minor $A'$ of $A_{\Pc}$ with
$\det(A') = \pm1$. 
Then $\Pc$ is a Gorenstein Fano polytope. 
Furthermore if $\Pc$ is a simplicial
polytope, then $\Pc$ is a smooth Fano polytope.
\end{Corollary}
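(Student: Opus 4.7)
The plan is to combine Theorem~\ref{squarefree} with Lemma~\ref{basiclemma}. First, I would apply Theorem~\ref{squarefree} to obtain that $I_{[-B,A]^{\sharp}}$ possesses a squarefree initial ideal with respect to a reverse lexicographic order whose smallest variable corresponds to the column $\mathbf{0}$. Via the standard correspondence between squarefree initial ideals and unimodular regular triangulations (cf.\ \cite[Chapter 8]{Sturmfels}), this yields a unimodular triangulation $\Delta$ of $\Pc$ whose vertex set is contained in $\{\mathbf{0}, -\bb_1, \ldots, -\bb_m, \ab_1, \ldots, \ab_n\}$. As in the proof of Lemma~\ref{basiclemma}, the fact that $\mathbf{0}$ corresponds to the smallest variable in the reverse lexicographic order forces $\mathbf{0}$ to be a vertex of every maximal simplex of $\Delta$; combined with the $d \times d$ minor hypothesis (which ensures the $\ZZ$-span of the columns is all of $\ZZ^d$), unimodularity then forces $\{v_1, \ldots, v_d\}$ to be a $\ZZ$-basis of $\ZZ^d$ for every maximal simplex $\con(\mathbf{0}, v_1, \ldots, v_d)$.

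The next step is to identify $A_{\Pc}$ with $[-B,A]^{\sharp}$ and establish the Fano property. A short barycentric coordinate argument using the $\ZZ$-basis property shows that every lattice point in $\Pc$ is a vertex of $\Delta$, whence
\[
\Pc \cap \ZZ^d = \{\mathbf{0}, -\bb_1, \ldots, -\bb_m, \ab_1, \ldots, \ab_n\}.
\]
Moreover, since the face $\con(v_1, \ldots, v_d)$ opposite $\mathbf{0}$ in each maximal simplex lies inside a facet of $\Pc$, every non-$\mathbf{0}$ vertex of $\Delta$ lies on $\partial \Pc$; hence $\mathbf{0}$ is the unique interior lattice point of $\Pc$, so $\Pc$ is a Fano polytope. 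Since $A_{\Pc}$ now coincides with $[-B,A]^{\sharp}$, the toric ideal $I_{A_{\Pc}}$ inherits the squarefree initial ideal supplied by Theorem~\ref{squarefree}, and all hypotheses of Lemma~\ref{basiclemma} are verified.

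Applying Lemma~\ref{basiclemma} then shows that each facet of $\Pc$ lies on a supporting hyperplane of the form $a_1 z_1 + \cdots + a_d z_d = 1$ with every $a_j \in \ZZ$, so $\Pc^{\vee}$ is integral and $\Pc$ is Gorenstein Fano. If in addition $\Pc$ is simplicial, each facet has exactly $d$ vertices which, by the same lemma, form a $\ZZ$-basis of $\ZZ^d$, giving the smooth Fano conclusion. The main obstacle I anticipate is the identification $A_{\Pc} = [-B,A]^{\sharp}$ in the second paragraph: Theorem~\ref{squarefree} directly provides a squarefree initial ideal only for the sub-matrix $[-B,A]^{\sharp}$, whereas Lemma~\ref{basiclemma} is formulated in terms of the full lattice-point matrix $A_{\Pc}$, and bridging this gap requires the combined use of unimodularity of $\Delta$ and the $\det(A') = \pm 1$ hypothesis to exclude the existence of any further lattice points in $\Pc$.
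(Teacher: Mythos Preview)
The paper gives no explicit proof for this corollary; it is meant to follow at once by feeding the squarefree initial ideal from Theorem~\ref{squarefree} into Lemma~\ref{basiclemma}. Your outline does exactly this, and in fact goes further: you try to \emph{derive} the identification $A_{\Pc}=[-B,A]^{\sharp}$ from the other hypotheses, whereas the paper (compare the phrasing in the Introduction, where the condition $\Pc\cap\ZZ^{d}=\{\mathbf 0,-\bb_{1},\dots,-\bb_{m},\ab_{1},\dots,\ab_{n}\}$ and a minor of $[-B,A]$ are listed) tacitly treats this identification as part of the standing assumptions.

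The obstacle you flag at the end is real and, with the hypotheses exactly as printed in the corollary, cannot be bridged. Your step ``the $d\times d$ minor hypothesis ensures the $\ZZ$-span of the columns is all of $\ZZ^{d}$'' concerns the columns of $A_{\Pc}$, not those of $[-B,A]^{\sharp}$; but the unimodularity supplied by Theorem~\ref{squarefree} is unimodularity \emph{relative to the sublattice generated by the columns of $[-B,A]$}. If that sublattice is proper, the non-zero vertices of a maximal simplex of $\Delta$ need not form a $\ZZ$-basis of $\ZZ^{d}$, and extra interior lattice points may appear. Concretely, take $d=1$ and $A=B=[\,2\,]$: harmony and all initial-ideal hypotheses of Theorem~\ref{squarefree} hold trivially, $A_{\Pc}$ has columns $-2,-1,0,1,2$ so the $1\times 1$ minor $[1]$ is available, yet $\Pc=[-2,2]$ is not Fano. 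The clean fix---and what the paper evidently intends---is to require either that the $\det=\pm1$ minor already sits inside $[-B,A]$, or directly that $\Pc\cap\ZZ^{d}$ equals the given point set. Under either version your argument (and the paper's implicit one) goes through verbatim, and your extra paragraph then genuinely upgrades the paper's treatment by showing the Fano property rather than assuming it.
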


\begin{Example}
{\rm
Let $A_1$ and $A_2$ be the following matrices:
$$
A_1 = 
\begin{bmatrix}
1 & 0\\
0 & 1
\end{bmatrix},\ \ 
A_2= 
\begin{bmatrix}
1 & 0 & 1\\
0 & 1 & 1
\end{bmatrix}.
$$
Then, 
$A_i$ and $A_j$ are of harmony and 
satisfy the condition in Theorem \ref{squarefree}
for any $1 \leq i \le j \leq 2$.
By Corollary \ref{Fano}, we have three Gorenstein Fano polygons.
It is known that there are exactly 16 Gorenstein Fano polygons
(\cite[p.382]{CLS}).
}
\end{Example}

\section{Convex polytopes arising from simplicial complexes}
Let $[d] = \{1, \ldots, d \}$ and
${\bf e}_1, \ldots, \eb_{d}$ the standard coordinate unit vectors 
of ${\RR}^d$.
Given a subset $W \subset [d]$, one has
$\rho(W) = \sum_{j \in W} {\eb}_j \in {\RR}^d$.
In particular $\rho(\emptyset)$ is the origin of $\RR^d$.
Let $\Delta$ be a simplicial complex on the vertex set 
$[d]$.  Thus $\Delta$ is a collection of subsets of $[d]$ with 
$\{ i \} \in \Delta$ for each $i \in [d]$ such that if
$F \in \Delta$ and $F' \subset F$, then $F' \in \Delta$.
In particular $\emptyset \in \Delta$.
The {\em incidence matrix} $A_\Delta$ of $\Delta$ is the matrix
whose columns are those $\rho(F)$ with $F \in \Delta$.
We write $\Pc_\Delta \subset \RR^{d}$ for the $(0, 1)$-polytope
which is the convex hull of
$\{ \, \rho(F) \; : \; F \in \Delta \, \}$ in $\RR^d$.
One has $\dim \Pc_\Delta = d$.
It follows from the definition of simplicial complexes that 

\begin{Lemma}
\label{obvious}
Let $\Delta$ and $\Delta'$ be simplicial complexes on $[d]$.
Then $A_\Delta$ and $A_{\Delta'}$ are of harmony.
\end{Lemma}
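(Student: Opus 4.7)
The plan is to verify the harmony condition directly from the $(0,1)$-structure of incidence matrices, using only the fact that simplicial complexes are closed under taking subsets.

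First I would pick arbitrary columns $\ab$ of $A_\Delta^{\sharp}$ and $\bb$ of $A_{\Delta'}^{\sharp}$. By construction of $A_\Delta$, every column is of the form $\rho(F)$ for some $F \in \Delta$ (with $\rho(\emptyset) = \mathbf{0}$ giving the column added when forming $A_\Delta^\sharp$ if $\emptyset$ were not already counted). So I may write $\ab = \rho(F)$ and $\bb = \rho(G)$ with $F \in \Delta$ and $G \in \Delta'$ (allowing $F$ or $G$ to be empty).

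Next, I would unpack $\cb = \ab - \bb$ componentwise. Since $\rho(F)$ and $\rho(G)$ are $(0,1)$-vectors, the $i$th entry of $\cb$ equals $+1$ precisely when $i \in F \setminus G$, equals $-1$ precisely when $i \in G \setminus F$, and vanishes otherwise. Therefore
\[
\cb^{(+)} = \rho(F \setminus G), \qquad \cb^{(-)} = \rho(G \setminus F).
\]

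Finally, I would invoke the defining property of a simplicial complex: since $F \setminus G \subseteq F \in \Delta$, we have $F \setminus G \in \Delta$, and similarly $G \setminus F \subseteq G \in \Delta'$ forces $G \setminus F \in \Delta'$. Hence $\cb^{(+)}$ is a column of $A_\Delta^{\sharp}$ and $\cb^{(-)}$ is a column of $A_{\Delta'}^{\sharp}$, which is exactly the harmony condition. There is no real obstacle; once the notation $\rho(\cdot)$ is unpacked, the proof is a one-line appeal to subset-closure.
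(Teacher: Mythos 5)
Your proof is correct and is precisely the argument the paper leaves unstated (the paper simply asserts ``It follows from the definition of simplicial complexes that\dots'' and gives no proof). Writing $\ab = \rho(F)$, $\bb = \rho(G)$, observing $\cb^{(+)} = \rho(F\setminus G)$, $\cb^{(-)} = \rho(G\setminus F)$, and invoking subset-closure of $\Delta$ and $\Delta'$ is exactly the intended one-line verification, and you even flag the harmless notational wrinkle that $A_\Delta$ already contains the zero column $\rho(\emptyset)$, so the $\sharp$ operation is redundant here.
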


Following Lemma \ref{obvious}
together with 
Theorem \ref{squarefree}, it is reasonable 
to study the problem when
the toric ideal $I_{A_\Delta}$ of a simplicial complex $\Delta$ possesses 
a squarefree initial ideal with respect to a reverse lexicographic order
whose smallest variable corresponds to 
the column ${\bf 0} \in \ZZ^{d}$ of $A_\Delta$.

Let $\Delta$ be a simplicial complex on $[d]$.
Since $\{ i \} \in \Delta$ for each $i \in [d]$,
the $d \times d$ identity matrix is a $d \times d$ minor of $A_\Delta$.
It then follows from Lemma \ref{basiclemma} that 

\begin{Corollary}
\label{COR}
Let $\Delta$ be a simplicial complex on $[d]$.
Suppose that $I_{A_{\Delta}}$ possesses 
a squarefree initial ideal with respect to a reverse lexicographic order
whose smallest variable corresponds to 
the column ${\bf 0} \in \ZZ^{d}$ of $A_{\Delta}$.
Then, for each facet $\Fc$ of $\Pc_{\Delta}$ with ${\bf 0} \not\in \Fc$,
one has $\ZZ \Fc =\ZZ^d$
and the equation of the supporting hyperplane $\Hc \subset \RR^{d}$
with $\Fc \subset \Hc$ is of the form
$a_{1}z_{1} + \cdots + a_{d} z_{d} = 1$
with each $a_{j} \in \ZZ$.
\end{Corollary}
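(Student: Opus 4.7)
The plan is to deduce the corollary as a direct application of Lemma~\ref{basiclemma} with $\Pc := \Pc_\Delta$; all its hypotheses either follow from the combinatorial structure of a simplicial complex or are built into the statement of the corollary.

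First I would verify the geometric hypotheses. Since $\emptyset \in \Delta$, the vector $\rho(\emptyset) = {\bf 0}$ is a column of $A_\Delta$, so in particular ${\bf 0} \in \Pc_\Delta$. Since each singleton $\{i\} \in \Delta$, the standard basis vectors $\eb_1 = \rho(\{1\}), \ldots, \eb_d = \rho(\{d\})$ all appear as columns of $A_\Delta$; this simultaneously confirms $\dim \Pc_\Delta = d$ and exhibits the $d \times d$ identity matrix as a minor of $A_\Delta$ with determinant $1$, fulfilling the unimodular-minor hypothesis of Lemma~\ref{basiclemma}. The remaining hypothesis, concerning a squarefree initial ideal under a suitable reverse lexicographic order whose smallest variable corresponds to ${\bf 0}$, is assumed directly in the corollary, though phrased in terms of $A_\Delta$ rather than the matrix $A_{\Pc_\Delta}$ of all integer points of $\Pc_\Delta$ used in the lemma.

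The only step deserving a moment of care is therefore the identification $A_\Delta = A_{\Pc_\Delta}$ (up to column order). I would check that every integer point of $\Pc_\Delta$ has the form $\rho(F)$ with $F \in \Delta$: since $\Pc_\Delta$ is a $(0,1)$-polytope, each integer point equals $\rho(S)$ for some $S \subseteq [d]$, and if such an $S$ were not in $\Delta$, any convex-combination expression $\rho(S) = \sum_{F \in \Delta} c_F \rho(F)$ would force $c_F = 0$ for every $F$ meeting $[d] \setminus S$, leaving only $F \subseteq S$ and hence only $F \subsetneq S$; comparing coordinate sums would then give $|S| \le |S|-1$, a contradiction. With this identification in hand, Lemma~\ref{basiclemma} immediately yields both conclusions of the corollary: $\ZZ\Fc = \ZZ^d$ for each facet $\Fc$ of $\Pc_\Delta$ with ${\bf 0} \notin \Fc$, and the integrality of the coefficients $a_j$ in the supporting hyperplane equation. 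I do not anticipate any further obstacle.
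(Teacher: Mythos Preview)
Your proposal is correct and follows exactly the paper's route: note that the singletons $\{i\}\in\Delta$ furnish the identity matrix as a $d\times d$ minor of $A_\Delta$, then invoke Lemma~\ref{basiclemma}. You are in fact more careful than the paper, which silently identifies $A_\Delta$ with $A_{\Pc_\Delta}$; your verification that every lattice point of $\Pc_\Delta$ is some $\rho(F)$ with $F\in\Delta$ fills that small gap cleanly.
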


Let $G$ be a finite simple graph on $[d]$
and $E(G)$ the set of edges of $G$.
(Recall that a finite graph is simple if $G$ possesses no loop and no multiple edge.)
A subset $W \subset [d]$ is called {\em stable}  
if, for all $i$ and $j$ belonging to $W$ with $i \neq j$,
one has $\{i,j\} \not\in E(G)$.
Let $S(G)$ denote the set of stable sets of $G$.
One has $\emptyset \in S(G)$ and $\{ i \} \in S(G)$
for each $i \in [d]$.
Clearly $S(G)$ is a simplicial complex on $[d]$.
The {\em stable set polytope} $\Qc_G \subset \RR^{d}$
of $G$ is the $(0, 1)$-polytope $\Pc_{S(G)} \subset \RR^d$
arising from the simplicial complex $S(G)$.
A finite simple graph is said to be {\em perfect} 
(\cite{sptheorem}) if, 
for any induced subgraph $H$ of $G$
including $G$ itself,
the chromatic number of $H$ is
equal to the maximal cardinality of cliques 
of $H$.
(A chromatic number of $G$ is the smallest integer $t$ for which
there exist stable set $W_{1}, \ldots, W_{t}$ of $G$ with
$[d] = W_{1} \cup \cdots \cup W_{t}$
and a clique of $G$ is a subset $W \subset [d]$
which is a stable set of the complementary graph $\overline{G}$ of $G$.)
A complementary graph of a perfect graph is perfect (\cite{sptheorem}).

Recall that an integer matrix $A$
is {\em compressed} (\cite{compressed}, \cite{Sul}) if the initial ideal 
of the toric ideal $I_{A}$ is squarefree with respect to 
any reverse lexicographic order.  

\begin{Example}
\label{EXperfect}
{\em
Let $G$ be a perfect graph on $[d]$. 
Then $A_{\Delta}$, where $\Delta = S(G)$,
is compressed (\cite[Example 1.3 (c)]{compressed}).
Let $G$ and $G'$ be perfect graphs on $[d]$
and $\Qc \subset \RR^{d}$ be the Fano polytope 
which is the convex hull of $\Qc_{G} \cup (- \Qc_{G'})$.
It then follows from Corollary \ref{Fano} together with Lemma \ref{obvious}
that $\Qc$ is Gorenstein.
}
\end{Example}

\begin{Lemma}
\label{hole}
Let $\Delta$ be one of the following simplicial complexes:
\begin{enumerate}
\item[\rm (i)]
the simplicial complex on $[e]$ with the facets
$[e] \setminus \{ i \}$, $1 \leq i \leq e$, where $e \geq 3$;
\item[\rm (ii)]
$S(G)$, where $G$ is an odd hole of length $2\ell + 1$, where $\ell \geq 2$;
\item[\rm (iii)]
$S(G)$, where $G$ is an odd antihole of length $2\ell + 1$, where $\ell \geq 2$;
\end{enumerate}
Let $<$ be any reverse lexicographic order
whose smallest variable corresponds to 
the column ${\bf 0}$ of $A_\Delta$. 
Then the initial ideal ${\rm in}_{<}(I_{A_{\Delta}})$ cannot be squarefree.
(Recall that an odd hole is an induced odd cycle of length $\geq 5$
and an odd antihole is the complementary graph of an odd hole.) 
\end{Lemma}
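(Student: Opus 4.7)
The plan is to deduce the lemma from Corollary \ref{COR} by contrapositive: in each case I will exhibit a facet $\Fc$ of $\Pc_\Delta$ with $\mathbf{0}\notin\Fc$ whose supporting hyperplane cannot be written in the form $a_1 z_1+\cdots+a_d z_d=1$ with each $a_j\in\ZZ$. The failure of the second conclusion of Corollary \ref{COR} then forces ${\rm in}_{<}(I_{A_{\Delta}})$ to fail to be squarefree, whatever such order $<$ is chosen.

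The natural candidate facet in each case is the convex hull of the ``top'' faces of $\Delta$: in (i) the $e$ vectors $\rho([e]\setminus\{i\})$, $i\in[e]$, and in (ii) and (iii) the characteristic vectors of the maximum stable sets of $G$. A coordinate-sum check gives $e-1$ in (i); $\ell$ in (ii); and---noting that for $\ell\ge 2$ the cycle $C_{2\ell+1}$ contains no triangles, so its maximum cliques are precisely its edges---$2$ in (iii). Thus in every case the listed vectors lie on a common hyperplane $\Hc\colon z_1+\cdots+z_d=c$ with $c\ge 2$, and since this equation is already primitive, no rescaling produces right-hand side $1$ with integer coefficients.

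It remains to verify that the listed points span a facet of the $d$-dimensional polytope $\Pc_\Delta$, i.e.\ contain $d$ affinely independent vectors. Case (i) is immediate because the differences $\rho([e]\setminus\{i\})-\rho([e]\setminus\{1\})=\eb_1-\eb_i$ for $i=2,\dots,e$ are visibly linearly independent, giving $e$ affinely independent points on the $(e-1)$-dimensional hyperplane. For (ii) and (iii) the $2\ell+1$ maximum-stable-set characteristic vectors form the rows of a circulant matrix under the rotational symmetry of $C_{2\ell+1}$, with generator polynomial $p(x)=1+x^2+\cdots+x^{2(\ell-1)}$ in (ii) and $p(x)=1+x$ in (iii); its eigenvalues are $p(\omega^k)$, $0\le k\le 2\ell$, where $\omega=e^{2\pi i/(2\ell+1)}$.

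The principal obstacle is to show these $2\ell+1$ eigenvalues are all nonzero. In (iii), $p(\omega^k)=1+\omega^k$ vanishes only if $\omega^k=-1$, which is impossible because $2\ell+1$ is odd. In (ii), $p(1)=\ell\neq 0$, and for $k\neq 0$ the geometric-series identity $p(\omega^k)=(\omega^{-k}-1)/(\omega^{2k}-1)$ (using $\omega^{2\ell+1}=1$ and $\gcd(2,2\ell+1)=1$) is manifestly nonzero. Hence both circulants are nonsingular, the $2\ell+1$ characteristic vectors are linearly---and therefore affinely---independent on the $2\ell$-dimensional hyperplane $\Hc$, and so span a facet of the $(2\ell+1)$-dimensional polytope $\Pc_\Delta$, completing the reduction to Corollary \ref{COR}. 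An alternative to this self-contained circulant calculation is to invoke Padberg's classical theorem that the odd-hole and odd-antihole inequalities are facet-defining for the respective stable set polytopes.
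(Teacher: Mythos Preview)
Your proof is correct and follows essentially the same route as the paper: both argue by contrapositive from Corollary~\ref{COR}, exhibiting in each case the supporting hyperplane $\sum z_i = c$ (with $c=e-1,\ \ell,\ 2$ respectively) and observing that this equation cannot be rescaled to have right-hand side $1$ with integer coefficients. The only difference is in how the facet claim for cases (ii) and (iii) is established: the paper simply cites \cite{Padberg} and \cite{Wagler}, whereas you supply a self-contained argument via the nonsingularity of the relevant $(2\ell+1)\times(2\ell+1)$ circulant matrices. Your eigenvalue computation is clean and correct (the key points being that $2\ell+1$ is odd, so $\omega^k\neq -1$, and that $\omega^{2\ell k}=\omega^{-k}$), and it is worth noting that the $2\ell+1$ cyclic shifts really do exhaust the maximum stable sets in both cases, which you use implicitly. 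Your version has the merit of being independent of the polyhedral literature on stable set polytopes; the paper's version is shorter but relies on those references.
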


\begin{proof}
By virtue of Corollary \ref{COR},
we find a supporting hyperplane $\Hc$
of $\Pc_{\Delta}$ with ${\bf 0} \not\in \Hc$ for which 
$\Hc \cap \Pc_{\Delta}$ is a facet of $\Pc_{\Delta}$
such that the equation of $\Hc$ cannot be of the form
$a_{1}z_{1} + \cdots + a_{d} z_{d} = 1$
with each $a_{j} \in \ZZ$.
In each of (i), (ii) and (iii), 
the equation of a desired hyperplane $\Hc$
is as follows:
\begin{enumerate}
\item[(i)]
$\sum_{i=1}^{e} z_{i} = e - 1$;
\item[(ii)]
$\sum_{i=1}^{2\ell+1} z_{i} = \ell$;
\item[(iii)]
$\sum_{i=1}^{2\ell+1} z_{i} = 2$.
\end{enumerate}
In (i), it is easy to see that $\Hc \cap \Pc_{\Delta}$ is a facet of $\Pc_{\Delta}$.  
In each of (ii) and (iii),
it is known (\cite{Padberg}, \cite{Wagler}) that
$\Hc \cap \Pc_{\Delta}$ is a facet of $\Pc_{\Delta}$.  
\end{proof}

Let $B =[{\bf b}_1,\ldots, {\bf b}_m] \in \ZZ^{d \times m}$ be
a submatrix of $A =[{\bf a}_1,\ldots, {\bf a}_n] \in \ZZ^{d \times n}$.
Then, $K[B]$ is called a {\em combinatorial pure subring} of $K[A]$
if the convex hull of $\{{\bf b}_1,\ldots, {\bf b}_m\}$ is a face of 
the convex hull of  $\{{\bf a}_1,\ldots, {\bf a}_n\}$.
For any combinatorial pure subring $K[B]$ of $K[A]$,
it is known that, if the initial ideal of $I_A$ is squarefree,
then so is the corresponding initial ideal of $I_B$.
See \cite{OHHcp, OHS} for details.

\begin{Lemma}
\label{FACET}
Let $\Delta$ be a simplicial complex on $[d]$ and $\Delta'$ an induced 
subcomplex of $\Delta$ which is one of (i), (ii) and (iii) of Lemma {\rm \ref{hole}}. 
Let $<$ be any reverse lexicographic order
whose smallest variable corresponds to 
the column ${\bf 0} \in \ZZ^{d}$ of $A_\Delta$. 
Then the initial ideal ${\rm in}_{<}(I_{A_{\Delta}})$ cannot be squarefree.
\end{Lemma}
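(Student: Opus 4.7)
The plan is to reduce to Lemma \ref{hole} via the theory of combinatorial pure subrings. Suppose $\Delta'$ is the induced subcomplex of $\Delta$ on a vertex set $W \subseteq [d]$, so that $\Delta' = \{F \in \Delta : F \subseteq W\}$. First I would verify that $\Pc_{\Delta'}$ sits inside $\Pc_\Delta$ as a face. The linear form $\ell(\zb) = \sum_{i \in [d]\setminus W} z_i$ is nonnegative on $\Pc_\Delta \subseteq [0,1]^d$ and vanishes exactly at those $\rho(F)$ with $F \subseteq W$; hence $\{\zb : \ell(\zb) = 0\}$ is a supporting hyperplane of $\Pc_\Delta$ whose intersection with $\Pc_\Delta$ equals $\Pc_{\Delta'}$. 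Consequently $K[A_{\Delta'}]$ is a combinatorial pure subring of $K[A_{\Delta}]$.

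Next I would transport the monomial order. The variables of $K[A_{\Delta'}]$ form the subset of the variables of $K[A_\Delta]$ indexed by faces of $\Delta'$, and the restriction $<'$ of $<$ to these variables is again a reverse lexicographic order. Because $\emptyset \in \Delta'$, the variable corresponding to ${\bf 0} \in \ZZ^d$ belongs to $K[A_{\Delta'}]$, and since it is smallest under $<$, it remains smallest under $<'$. Thus $<'$ satisfies the hypothesis of Lemma \ref{hole}.

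Finally, I would invoke the combinatorial pure subring principle of \cite{OHHcp, OHS}: if ${\rm in}_<(I_{A_\Delta})$ is squarefree, then so is ${\rm in}_{<'}(I_{A_{\Delta'}})$. Lemma \ref{hole} applied to $\Delta'$ in cases (i), (ii), (iii) forbids the latter, producing the required contradiction. The main obstacle, which is essentially bookkeeping, is to confirm that the pure-subring transfer result applies to reverse lexicographic orders with the smallest variable preserved; modulo that verification the lemma follows as the contrapositive of Lemma \ref{hole}.
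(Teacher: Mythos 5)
Your proof is correct and follows essentially the same route as the paper: realize $\Pc_{\Delta'}$ as a face of $\Pc_\Delta$, conclude $K[A_{\Delta'}]$ is a combinatorial pure subring of $K[A_\Delta]$, restrict $<$ to the induced reverse lexicographic order $<'$ (under which the variable for ${\bf 0}$ is still smallest), and transfer squarefreeness to contradict Lemma \ref{hole}. The verification you flag at the end is exactly what the cited combinatorial pure subring result of \cite{OHHcp, OHS} delivers, so there is no gap.
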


\begin{proof}
Let $\Delta'$ be the induced subcomplex of $\Delta$ on $V$, 
where $V \subset [d]$, and
$<'$ the reverse lexicographic order 
induced by $<$.
Lemma \ref{hole} says that ${\rm in}_{<'}(I_{A_{\Delta'}})$ cannot be squarefree.
Since $\Delta'$ is an induced subcomplex of $\Delta$,
it follows that $\Pc_{\Delta'}$ is a face of $\Pc_{\Delta}$.
Thus $K[A_{\Delta'}]$ is a combinatorial pure subring of $K[A_{\Delta}]$
and hence
${\rm in}_{<}(I_{A_{\Delta}})$ cannot be squarefree, as required.
\end{proof}

We are now in the position to state a combinatorial 
characterization of simplicial
complexes $\Delta$ on $[d]$
for which the toric ideal $I_{A_{\Delta}}$ possesses 
a squarefree initial ideal with respect to a reverse lexicographic order
whose smallest variable corresponds to 
the column ${\bf 0} \in \ZZ^{d}$ of $A_\Delta$.

\begin{Theorem}
\label{compressed}
Let $\Delta$ be a simplicial complex on $[d]$.
Then the following conditions are equivalent:
\begin{enumerate}
\item[{\rm (i)}]
There exists a perfect graph $G$ on $[d]$ with
$\Delta = S(G)$;
\item[{\rm (ii)}]
$A_\Delta$ is compressed;
\item[{\rm (iii)}]
$I_{A_\Delta}$ possesses 
a squarefree initial ideal 
with respect to a reverse lexicographic order
whose smallest variable corresponds to 
the column ${\bf 0} \in \ZZ^{d}$ of $A_\Delta$.
\end{enumerate}
\end{Theorem}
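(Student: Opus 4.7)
The plan is to prove the cycle (i) $\Rightarrow$ (ii) $\Rightarrow$ (iii) $\Rightarrow$ (i). The first implication is already recorded in Example \ref{EXperfect}, and (ii) $\Rightarrow$ (iii) is immediate from the definition of a compressed matrix, since condition (iii) requires squarefreeness of the initial ideal for merely \emph{some} admissible reverse lexicographic order. The content of the theorem thus lies in (iii) $\Rightarrow$ (i), which I would carry out in two steps.

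First I would argue that $\Delta$ must be a \emph{flag} simplicial complex, so that $\Delta = S(G)$ for some finite simple graph $G$ on $[d]$. Suppose not; then there exists a minimal non-face $F$ of $\Delta$ with $|F| = e \geq 3$, and after relabelling we may take $F = [e]$. The induced subcomplex $\Delta|_F$ then has precisely $[e] \setminus \{i\}$, $1 \leq i \leq e$, as its facets, so $\Delta|_F$ is exactly the simplicial complex of Lemma \ref{hole}(i). Since $\Delta|_F$ is an induced subcomplex of $\Delta$, Lemma \ref{FACET} forces ${\rm in}_{<}(I_{A_\Delta})$ to be non-squarefree for every reverse lexicographic order whose smallest variable corresponds to ${\bf 0} \in \ZZ^d$, contradicting (iii). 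Hence every minimal non-face has cardinality $2$, i.e.\ $\Delta$ is flag and equals $S(G)$ where $G$ is the graph on $[d]$ whose edges are the minimal non-faces.

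Next I would show that $G$ is perfect, by invoking the Strong Perfect Graph Theorem (\cite{sptheorem}), which characterizes perfect graphs as precisely those graphs with no induced odd hole and no induced odd antihole. If $G$ were not perfect, there would exist an induced subgraph $H$ on some $V \subset [d]$ which is either an odd hole or an odd antihole of length $2\ell+1$ with $\ell \geq 2$. Then $S(H)$ is exactly the induced subcomplex $\Delta|_V$ of $\Delta = S(G)$, and $S(H)$ is of the form (ii) or (iii) of Lemma \ref{hole}. Applying Lemma \ref{FACET} once more, ${\rm in}_{<}(I_{A_\Delta})$ cannot be squarefree for any admissible reverse lexicographic order, again contradicting (iii). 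Therefore $G$ is perfect and $\Delta = S(G)$, completing the cycle.

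The main obstacle is logistical rather than technical: the deep input is the Strong Perfect Graph Theorem, which the paper has already cited and may freely appeal to. All the combinatorial/commutative-algebraic obstructions (the boundary-of-simplex configuration and the odd hole/antihole stable set polytopes) have been reduced by Lemmas \ref{hole} and \ref{FACET} to the statement that certain supporting hyperplanes of $\Pc_\Delta$ fail the integrality condition of Corollary \ref{COR}, so the remaining work is simply to verify that flagness plus the absence of odd holes and odd antiholes is equivalent to (i), which is precisely the Strong Perfect Graph Theorem.
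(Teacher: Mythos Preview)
Your proposal is correct and follows essentially the same approach as the paper: both prove the cycle (i) $\Rightarrow$ (ii) $\Rightarrow$ (iii) $\Rightarrow$ (i), with the last implication handled in two steps (flagness via the boundary-of-simplex obstruction of Lemma~\ref{hole}(i), and perfection via the Strong Perfect Graph Theorem together with the odd hole/antihole obstructions of Lemma~\ref{hole}(ii)--(iii)), each time invoking Lemma~\ref{FACET} to propagate non-squarefreeness from the induced subcomplex to $\Delta$.
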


\begin{proof}
In \cite[Example 1.3 (c)]{compressed},
(i) $\Rightarrow$ (ii) is proved. 
(See also \cite[\S 4]{GPT}.)
Moreover,
(ii) $\Rightarrow$ (iii) is trivial.
Now, in order to show (iii) $\Rightarrow$ (i),
we fix a reverse lexicographic
order $<$ whose smallest variable corresponds ${\bf 0} \in \ZZ^{d}$ 
of $A_\Delta$. 

\smallskip

\noindent
{\bf (First Step)}  Suppose that
there is {\em no} finite simple graph $G$ on $[d]$ with
$\Delta = S(G)$.
Given a simplicial complex $\Delta$ on $[d]$, 
there is a finite simple
graph $G$ on $[d]$ with $\Delta = S(G)$ if and only if $\Delta$ is flag,
i.e, every minimal nonface 
of $\Delta$ is an edge of $\Delta$.
(See, e.g., \cite[Lemma 9.1.3]{HerHibi}.
Note that $S(G)$ is the clique complex of the complement graph of $G$.)
Let $\Delta$ be a simplicial complex which is not flag
and $V \subset [d]$, where $|V| \geq 3$,
a minimal nonface of $\Delta$.  One has 
$V \setminus \{ i \} \in \Delta$ for all $i \in V$.
Thus the induced subcomplex $\Delta'$ of $\Delta$ on V coincides with 
the simplicial complex (i) of Lemma \ref{hole}.   

\smallskip

\noindent
{\bf (Second Step)} Let $G$ be a nonperfect graph on $[d]$ with
$A_\Delta = S(G)$.
The strong perfect graph theorem \cite{sptheorem}
guarantees that $G$ 
possesses either an odd hole or an odd antihole.
Thus $\Delta$ contains an induced subcomplex 
$\Delta'$ which coincides with either
(ii) or (iii) of Lemma \ref{hole}.

\smallskip

As a result, Lemma \ref{FACET} says that 
$I_{A_\Delta}$ possesses no squarefree initial ideal 
with respect to a reverse lexicographic order
whose smallest variable corresponds to 
the column ${\bf 0} \in \ZZ^{d}$ of $A_\Delta$.
This completes the proof of (iii) $\Rightarrow$ (i).
\, \, \, \, \, \, \, \, \, \, \,
\end{proof}

\begin{Example}
{\em
Let $A \in \ZZ^{d \times n}$ for which each entry of $A$ belongs
to $\{ 0, 1 \}$ and $I_{A^{\sharp}}$ the toric ideal of $A^{\sharp}$.
In general, even if $I_{A^{\sharp}}$ possesses 
a squarefree initial ideal with respect to a reverse lexicographic order
whose smallest variable corresponds to 
the column ${\bf 0} \in \ZZ^{d}$ of $A^{\sharp}$,
the matrix $A^{\sharp}$ may not be compressed.  For example, if
\begin{eqnarray*}
A = \left[ 
\begin{array}{ccccccc}
1& 0& 1& 1& 0& 0&0\\
1& 1& 0& 0& 0& 0&0\\
0& 1& 1& 0& 0& 0&0\\
0& 0& 0& 1& 1& 0&1\\
0& 0& 0& 0& 1& 1&0\\
0& 0& 0& 0& 0& 1&1\\
\end{array} 
\right],
\end{eqnarray*}
then $I_{A^{\sharp}}$ is generated by 
$x_{1}x_{3}x_{5}x_{7}- x_{2}x_{4}^{2}x_{6}$.  Thus
the initial ideals of $I_{A^{\sharp}}$ with respect to the reverse 
lexicographic order induced by the ordering 
\[
z < x_{2} < x_{1} < x_{3} < x_{4} < x_{5} < x_{6} < x_{7}
\]
is generated by $x_{1}x_{3}x_{5}x_{7}$, while the initial ideals 
of $I_{A^{\sharp}}$ with respect to 
the reverse lexicographic order induced by the ordering
\[
z < x_{1} < x_{2} < x_{3} < x_{4} < x_{5} < x_{6} < x_{7}
\]
is generated by $x_{2}x_{4}^{2}x_{6}$.  
Even though $A^{\sharp}$ satisfies the condition (iii) of
Theorem \ref{compressed}, the integer matrix $A^{\sharp}$ 
cannot be compressed.
}
\end{Example}

Apart from Theorem \ref{compressed}, we can ask the problem when
the convex polytope $\Pc \subset \RR^{d}$ which is the convex hull
of $\Pc_{\Delta} \cup (- \Pc_{\Delta'})$, where $\Delta$ and $\Delta'$
are simplicial complexes on $[d]$, is a Gorenstein Fano polytope.

\begin{Theorem}
\label{GORFANO}
Let $\Delta$ and $\Delta'$ be simplicial complexes on $[d]$
and $\Pc \subset \RR^{d}$ the convex polytope  which is the convex hull
of $\Pc_{\Delta} \cup ( - \Pc_{\Delta'})$.
Then $\Pc$ is a Gorenstein Fano polytope
if and only if there exist perfect graphs $G$ and $G'$ on $[d]$
with $\Delta = S(G)$ and $\Delta' = S(G')$.
\end{Theorem}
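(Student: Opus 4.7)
The implication ``$\Leftarrow$'' is Example~\ref{EXperfect}: if $\Delta = S(G)$ and $\Delta' = S(G')$ for perfect graphs $G, G'$, then Lemma~\ref{obvious} gives that $A_\Delta$ and $A_{\Delta'}$ are of harmony, Theorem~\ref{compressed} supplies the reverse-lexicographic squarefree initial ideals required in Theorem~\ref{squarefree}, and the condition $\{i\} \in S(G)$ ensures that the $d \times d$ identity sits inside $A_\Delta$; hence Corollary~\ref{Fano} yields that $\Pc$ is Gorenstein Fano.

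For ``$\Rightarrow$'' the plan is to suppose $\Pc$ is Gorenstein Fano and derive a contradiction if $\Delta$ is not the stable-set complex of any perfect graph; the analogous statement for $\Delta'$ then follows by running the same argument on $-\Pc$, which is again Gorenstein Fano but corresponds to the pair $(\Delta', \Delta)$. First I would invoke the First and Second Steps of the proof of Theorem~\ref{compressed}: when $\Delta \neq S(G)$ for every perfect $G$, $\Delta$ carries an induced subcomplex $\Delta_V$ on some $V \subseteq [d]$ coinciding with one of the types (i), (ii), (iii) of Lemma~\ref{hole}, and Lemma~\ref{hole} also exhibits a hyperplane $\Hc$ of the form $\sum_{i \in V} z_i = c$, with $c \in \{e-1, \ell, 2\}$ and always $c \geq 2$, cutting out a facet of $\Pc_{\Delta_V}$.

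The next step is to verify that $\Hc$, regarded as a hyperplane in $\RR^d$, still supports the larger polytope $\Pc$. For every vertex $\rho(F) \in \Pc_\Delta$ one has $\sum_{i \in V} \rho(F)_i = |F \cap V| \leq c$ because $F \cap V \in \Delta_V$ and $c$ is the maximum size of a face of $\Delta_V$; for every vertex $-\rho(F') \in -\Pc_{\Delta'}$ the sum equals $-|F' \cap V| \leq 0 < c$. Hence $\Hc \cap \Pc$ is a nonempty face of $\Pc$ containing the facet of $\Pc_{\Delta_V}$ picked out by $\Hc$, so it lies inside some facet $\Fc$ of $\Pc$; by Gorenstein-ness of $\Pc$, $\Fc$ has an integer supporting equation $\sum_{i=1}^{d} a_i z_i = 1$, forcing $\sum_{i \in F} a_i = 1$ for every maximal face $F$ of $\Delta_V$.

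The final step is a case-by-case computation producing a contradiction from these equalities. In case (i) the facets $V \setminus \{j\}$, $j \in V$, give $\sum_{i \in V} a_i - a_j = 1$ for all $j$, so $a_j$ is constant on $V$ with value $1/(e-1)$. In case (ii) the cyclic rotations $F + k$ and $F + k + 2$ of the maximum stable set $F = \{1, 3, \ldots, 2\ell-1\}$ of $C_{2\ell+1}$ are again maximum stable sets whose symmetric difference is the pair of consecutive indices $\{k, k+1\}$ (modulo $2\ell + 1$), so the equations $\sum_{i \in F+k} a_i = 1$ yield $a_k = a_{k+1}$ cyclically, making $a_i$ constant on $V$ with value $1/\ell$. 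In case (iii) the edges of $C_{2\ell+1}$ give $a_i + a_{i+1} = 1$ cyclically, whence $a_i = a_{i+2}$; since $\gcd(2, 2\ell+1) = 1$, $a_i$ is constant on $V$ with value $1/2$. As $e - 1 \geq 2$ and $\ell \geq 2$, none of $1/(e-1), 1/\ell, 1/2$ is an integer, contradicting $a_i \in \ZZ$. I expect the main obstacle to be the geometric lifting step of the third paragraph, namely that $\Hc$ continues to support $\Pc$ after adjoining the vertices $-\rho(F')$; once that is secured, Gorenstein-ness converts the odd-hole, odd-antihole, or non-flag obstruction of Lemma~\ref{hole} into a clean arithmetic contradiction.
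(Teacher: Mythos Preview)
Your proof is correct and follows essentially the same route as the paper: locate an obstructing induced subcomplex of type (i), (ii), or (iii) from Lemma~\ref{hole}, check that the associated hyperplane $\sum_{i\in V} z_i = c$ still supports the big polytope $\Pc$, pass to a containing facet with integer equation $\sum a_i z_i = 1$, and extract an arithmetic contradiction. The only cosmetic difference is that the paper sums the facet equations to obtain $\sum_{i\in V} a_i \notin \ZZ$ directly, whereas you push a bit further to show each $a_i$ is the same non-integer fraction; both arguments are equally valid.
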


\begin{proof}
The ``If'' part follows from Example \ref{EXperfect}.
To see why the ``Only If'' part is true,  
suppose that either
$\Delta$ is not flag
or there is a nonperfect graph $G$ with $\Delta = S(G)$.
Since $\Pc \subset \RR^{d}$ is a Gorenstein Fano polytope,
the equation of the supporting hyperplane $\Hc \subset \RR^{d}$
for which $\Hc \cap \Pc$ is a facet of $\Pc$ is of the form
$a_{1}z_{1} + \cdots + a_{d}z_{d} = 1$ with each $a_{j} \in \ZZ$.

Let $\Delta$ be not flag and $V \subset [d]$ with $|V| \geq 3$
for which $V \setminus \{ i \} \in \Delta$ for all $i \in V$
and $V \notin \Delta$.
Let, say, $V = [e]$ with $e \geq 3$.
Then the hyperplane $\Hc' \subset \RR^{d}$ 
defined by the equation
$z_{1} + \cdots + z_{e} = e - 1$ 
is a supporting hyperplane of $\Pc$.  Let $\Fc$ be a facet of $\Pc$
with $\Hc' \cap \Pc \subset \Fc$ and  
$a_{1}z_{1} + \cdots + a_{d}z_{d} = 1$ with each $a_{j} \in \ZZ$
the equation of the supporting hyperplane $\Hc \subset \RR^{d}$
with $\Fc \subset \Hc$.
Since $\rho(V \setminus \{ i \}) \in \Hc$ for all $i \in V$,
one has $\sum_{j \in [e] \setminus \{ i \}} a_{j} = 1$.
Thus $(e - 1) (a_{1} + \cdots + a_{e}) = e$.
Hence $a_{1} + \cdots + a_{e} \not\in \ZZ$, a contradiction.

Let $\Delta = S(G)$, where $G$ possesses an odd hole $C$ 
of length $2\ell + 1$ with the vertices, say, $1, \ldots, 2\ell + 1$,
where $\ell \geq 2$.
Then the hyperplane $\Hc' \subset \RR^{d}$ 
defined by the equation
$z_{1} + \cdots + z_{2\ell + 1} = \ell$ 
is a supporting hyperplane of $\Pc$.  Let $\Fc$ be a facet of $\Pc$
with $\Hc' \cap \Pc \subset \Fc$ and  
$a_{1}z_{1} + \cdots + a_{d}z_{d} = 1$ with each $a_{j} \in \ZZ$
the equation of the supporting hyperplane $\Hc \subset \RR^{d}$
with $\Fc \subset \Hc$.
The maximal stable sets of $C$ is 
\[
\{1, 3, \ldots, 2\ell - 1\}, \{2, 4, \ldots, 2\ell \}, \ldots, 
\{2\ell + 1, 2, 4, \ldots, 2\ell - 2\}
\]
and each $i \in [2\ell - 2]$ appears $\ell$ times in the above list.
Since, for each maximal stable set $U$ of $C$, 
one has $\sum_{i \in U} a_{i} = 1$, it follows that 
$\ell(a_{1} + \cdots + a_{2\ell + 1}) = 2\ell + 1$.
Hence $a_{1} + \cdots + a_{e} \not\in \ZZ$, a contradiction.

Let $\Delta = S(G)$, where $G$ possesses an odd antihole $C$
with the vertices, say, $1, \ldots, 2\ell + 1$, where $\ell \geq 2$.
Then the hyperplane $\Hc' \subset \RR^{d}$ 
defined by the equation
$z_{1} + \cdots + z_{2\ell + 1} = 2$ 
is a supporting hyperplane of $\Pc$.  Let $\Fc$ be a facet of $\Pc$
with $\Hc' \cap \Pc \subset \Fc$ and  
$a_{1}z_{1} + \cdots + a_{d}z_{d} = 1$ with each $a_{j} \in \ZZ$
the equation of the supporting hyperplane $\Hc \subset \RR^{d}$
with $\Fc \subset \Hc$.
The maximal stable sets of $C$ is
\[
\{1, 2\}, \{2, 3\}, \ldots, \{2\ell+1, 1\}
\]
and each $i \in [2\ell - 2]$ appears twice in the above list.
Since, for each maximal stable set $U$ of $C$, 
one has $\sum_{i \in U} a_{i} = 1$, it follows that 
$2(a_{1} + \cdots + a_{2\ell + 1}) = 2\ell + 1$.
Hence $a_{1} + \cdots + a_{e} \not\in \ZZ$, a contradiction.
\, \, \, \, \, \, \, \, \, \, 
\, \, \, \, \, \, \, \, \, \, 
\, \, \, \, \, \, \, \, \, \, \,
\end{proof}

\bigskip

\noindent
{\bf Acknowledgment.}
The authors are grateful to an anonymous referee for 
useful suggestions and helpful comments.


\begin{thebibliography}{99}

\bibitem{ata}
C.~A.~Athanasiadis, Ehrhart polynomials, simplicial polytopes, magic squares and a conjecture of Stanley, {\em J. Reine Angew. Math.} {\bf 583} (2005), 163--174.

\bibitem{Batyrev}
V.~Batyrev, 
Dual polyhedra and mirror symmetry for Calabi--Yau 
hypersurfaces in toric varieties, 
{\em J. Algebraic Geom.} {\bf 3} (1994), 493--535.

\bibitem{sptheorem}
M.~Chudnovsky, N.~Robertson, P.~Seymour and R.~Thomas,
The strong perfect graph theorem,
{\em Ann. of Math.} {\bf 164} (2006), 51--229.


\bibitem{r1}
H.~Conrads,
Weighted projective spaces and reflexive simplices,
{\em Manuscripta Math.} {\bf 107} (2002), 215--227.

\bibitem{CLS}
D.~Cox, J.~Little and H.~Schenck, ``Toric Varieties,''
 Amer. Math. Soc., Providence, RI, 2011.


\bibitem{GPT}
J.~Gouveia, P.~A.~Parrilo and R.~R.~Thomas,
Theta bodies for polynomial ideals,
{\em SIAM Journal on Optimization} {\bf 20} (2010),
2097--2118.

\bibitem{HK}
G.~Heged\"us and A.~M.~Kasprzyk,
The boundary volume of a lattice polytope,
{\em Bull. Aust. Math. Soc.} {\bf 85} (2012), 84--104. 






\bibitem{HerHibi}
J.~Herzog and T.~Hibi, ``Monomial ideals,'' Grad.~Texts in Math. {\bf 260},
Springer, London, 2010.


\bibitem{Hibidual}
T.~Hibi, 
Dual polytopes of rational convex polytopes,
{\em Combinatorica} {\bf 12} (1992), 237--240.

\bibitem{Hibiorder}
T.~Hibi, Toric varieties arising from canonical triangulations of
 poset polytopes are projective, {\it in} ``Words, Languages and
 Combinatorics II'' (M. Ito and H. Jurgensen, Eds.), World Scientific,
 Singapore, 1994, pp. 192 -- 199.

\bibitem{dojoEN}
T.~Hibi, Ed., 
``Gr\"{o}bner Bases: Statistics and Software Systems,'' Springer, 2013.

\bibitem{HMOS}
T. Hibi, K. Matsuda, H. Ohsugi and K. Shibata,
Centrally symmetric configurations of order polytopes,
{\em J. Algebra} {\bf 443} (2015), 469--478.

\bibitem{Higashi}
A.~Higashitani, Smooth Fano polytopes arising from finite directed graphs,
{\em Kyoto J. Math.} {\bf 55} (2015), 579--592.


\bibitem{r2}
A.~M.~Kasprzyk,
Classifying terminal weighted projective space,
preprint, 2013.\\
{\tt arXiv:1304.3029.}


\bibitem{survey}
A.~M.~Kasprzyk and B.~Nill,
Fano polytopes,
{\em in} ``Strings, Gauge Fields, and the Geometry Behind -- the Legacy of Maximilian Kreuzer''
(A. Rebhan, L. Katzarkov, J. Knapp, R. Rashkov, E. Scheidegger, Eds.), World Scientific, 2012, pp. 349--364. 


\bibitem{three}
M.~Kreuzer and H.~Skarke,
Classification of reflexive polyhedra in three dimensions, 
{\em Adv. Theor. Math. Phys.} {\bf 2} (1998), no. 4, 853--871.

\bibitem{four}
M.~Kreuzer and H.~Skarke,
Complete classification of reflexive polyhedra in four dimensions, 
{\em Adv. Theor. Math. Phys.} {\bf 4} (2000), no. 6, 1209--1230.

\bibitem{r3}
B. Nill,
Volume and lattice points of reflexive simplices,
{\it Discrete Comput. Geom.} {\bf 37} (2007), 301--320.


\bibitem{OHS}
H.~Ohsugi, A geometric definition of combinatorial pure subrings and
Gr\"obner bases of toric ideals of positive roots,
{\em Comment.~Math.~Univ.~St.~Pauli} {\bf 56} (2007), 27--44.

\bibitem{OHHcp}
H.~Ohsugi, J.~Herzog and T.~Hibi,
Combinatorial pure subrings,
{\em Osaka J. Math} {\bf 37} (2000), 745--757.

\bibitem{compressed}
H.~Ohsugi and T.~Hibi,
Convex polytopes all of whose reverse lexicographic initial ideals are squarefree,
{\em Proc. Amer. Math. Soc.} {\bf 129} (2001), 2541--2546.

\bibitem{rootsystem}
H.~Ohsugi and T.~Hibi,
Quadratic initial ideals of root systems,
{\em Proc. Amer. Math. Soc.} {\bf 130} (2002), 1913--1922.

\bibitem{CS}
H. Ohsugi and T. Hibi,
Centrally symmetric configurations of integer matrices,
{\em Nagoya Math. J.} {\bf 216} (2014), 153--170.



\bibitem{Sbook}
R.~P.~Stanley, 
``Combinatorics and Commutative Algebra,'' 
Second Ed., 
Birkh\"{a}user, 1996.
  

\bibitem{Padberg}
M.~W.~Padberg, Perfect zero-one matrices,
{\em Math.~Programming} {\bf 6} (1974), 180--196.

\bibitem{Sturmfels}
B. Sturmfels, 
``Gr\"{o}bner bases and convex polytopes," 
Amer. Math. Soc., Providence, RI, 1996. 

\bibitem{Sul}
S. Sullivant,
Compressed polytopes and statistical disclosure limitation,
{\it Tohoku Math. J.} {\bf 58} (2006), 433 -- 445.

\bibitem{Wagler}
A.~Wagler,
Critical edges in perfect line graphs and some
polyhedral consequences,
{\em Disc. Appl. Math.} {\bf 95} (1999), 455--466.


\end{thebibliography}
\end{document}